\def\PP{{\mathbb P}}
\def\OO{{\mathcal O}}
\def\F{\mathcal{F}}
\def\E{\mathcal{E}}
\def\cP{\mathcal{P}}
\def\Pic0{{\rm Pic}^0}
\def\*{{\underline *}}
\theoremstyle{plain}
\newtheorem{theorem}{Theorem}[section]
\newtheorem{proposition/example}[theorem]{Proposition/Example}
\newtheorem{corollary}[theorem]{Corollary}
\newtheorem{lemma}[theorem]{Lemma}
\theoremstyle{definition}
\newtheorem{remark}[theorem]{Remark}
\newtheorem{conjecture/question}[theorem]{Conjecture/Question}
\newtheorem{remark/definition}[theorem]{Remark/Definition}
\newtheorem{notation/assumptions}[theorem]{Assumptions/Notation}
\numberwithin{equation}{section}
\theoremstyle{remark}
\title{ 2-torsion points on theta divisors}
 \author[ G. Pareschi,  R. Salvati Manni]{  Giuseppe Pareschi and Riccardo Salvati Manni}
\address{G. Pareschi\\Dipartimento di Matematica,
              Universit\`a di Tor Vergata\\Italy}
\email{pareschi@mat.uniroma2.it}
\address{R. Salvati Manni \\Dipartimento di Matematica ``Guido Castelnuovo'', Universit\`a di Roma ``La Sapienza"\\Italy}
\email{salvati@mat.uniroma1.it}
 \thanks{GP was partially supported by the MIUR Excellence Department Project awarded to the Department of Mathematics, University of Rome Tor Vergata, CUP E83C18000100006".}
\keywords{abelian variety, theta divisor, torsion.}
\begin{document}
\maketitle
\begin{abstract}
In this  note we prove a sharp  bound  for the number of 2-torsion points on a  theta divisor and show that this is achieved only in the case of products of elliptic curves. This settles in the affirmative a conjecture of Marcucci and Pirola.

\end{abstract}

\section{Introduction}

Let $A$ be a $g$-dimensional principally polarized abelian variety. Let $\Theta$ be a  divisor on $A$ representing the principal polarization. We set  
$$\Theta(n):=\#A[n]\cap \Theta,$$
where $A[n]$ is the group of $n$-torsion points on $A$. The geometry of $\Theta(2)$ is interesting. In fact, once  a symmetric divisor $\Theta$ is chosen,  $A[2]$  splits in  the even and odd part. The odd points   lie in $\Theta$.  On the other hand, the even  ones which lie in $\Theta$  correspond  to the classical thetanulls. In the complex case,  their  presence provides a characterization of  decomposable  principally polarized abelian variety   (\cite{S}, \cite{DGPS}) or 
a hyperelliptic jacobian (\cite{Mu}).  

In \cite{PM}, Marcucci and Pirola  gave  a bound   for  $\Theta(2)$  (over $\mathbb C$). This  bound  has been recently improved by Auffarth, Pirola and Salvati Manni in  \cite{APS} where also a bound for $\Theta(n)$  is given.  However, these bounds were  not  optimal. In   \cite{PM} it  has been    conjectured that the maximal $\Theta(2)$ is computed exactly by products of elliptic curves, and a similar conjecture for $\Theta(n)$ has been formulated in \cite{APS}. The purpose of this note is to prove the  conjecture for $\Theta(2)$. Our methods are algebraic and work in  characteristic $\ne 2$. 
 Thus  we will  characterize  principally polarized abelian variety  with the  maximum number of thetanulls.

\begin{theorem} \label{main} Let $A$ be an principally polarized abelian variety over an algebraically closed field of characteristic $\ne 2$.  For all divisors $\Theta$ representing the principal polarization
$$\Theta(2)\leq 4^{g}- 3^g.$$
Moreover equality  holds  if and  only if  $A$  is a product of  elliptic  curves and $\Theta$ is a symmetric theta divisor of $A$.
\end{theorem}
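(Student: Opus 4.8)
The plan is to induct on $g$ and exploit the multiplicativity of both $\Theta(2)$ and the bound $4^g - 3^g$ under products. First I would set up the base case: for $g=1$, a symmetric theta divisor on an elliptic curve is a single point of order $2$ (the origin if $\Theta$ is the origin, or one fixed $2$-torsion point otherwise), and one checks $\Theta(2)\le 1 = 4^1-3^1$ directly, with equality for the symmetric choice. For the inductive step, the crucial input is a formula or inequality controlling $\#(A[2]\cap\Theta)$ in terms of the intersections $\Theta\cap(\text{translates})$. The natural tool is the theta group / Heisenberg action on $H^0(A,\OO_A(\Theta))$ together with the classical fact (in characteristic $\ne 2$) that for a symmetric $\Theta$ the value $\theta(x)$ at a $2$-torsion point $x$ is, up to a nonzero scalar, the product of all the thetanull-type quantities, and that a point $x\in A[2]$ lies on \emph{every} translate $\Theta_a$, $a\in A[2]$, iff it lies on "many" of them — more precisely I would use the inversion formula relating $\sum_{a\in A[2]}\mathbf{1}_{\Theta_a}(x)$ to a sum over $A[2]$ of squares of theta values, which is nonnegative.

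The key combinatorial step: for \emph{indecomposable} $A$ I want the strict inequality $\Theta(2) < 4^g - 3^g$, while for decomposable $A = A_1\times A_2$ with $\Theta = \Theta_1\times A_2 + A_1\times\Theta_2$ one has the exact identity
\begin{equation}
\Theta(2) = \Theta_1(2)\cdot 4^{g_2} + 4^{g_1}\cdot\Theta_2(2) - \Theta_1(2)\cdot\Theta_2(2),
\end{equation}
since $x=(x_1,x_2)\in\Theta$ iff $x_1\in\Theta_1$ or $x_2\in\Theta_2$, and inclusion–exclusion on $A_1[2]\times A_2[2]$ gives the stated count. Then the elementary algebraic identity
\begin{equation}
(4^{g_1}-3^{g_1})\cdot 4^{g_2} + 4^{g_1}\cdot(4^{g_2}-3^{g_2}) - (4^{g_1}-3^{g_1})(4^{g_2}-3^{g_2}) = 4^{g_1+g_2} - 3^{g_1+g_2}
\end{equation}
shows the bound is multiplicative and that equality propagates through products exactly when it holds on each factor. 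Combined with the induction this reduces everything to proving: an indecomposable ppav of dimension $g\ge 2$ has $\Theta(2) < 4^g - 3^g$, and any ppav achieving equality must be a product of elliptic curves.

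The hard part will be the indecomposable bound and the characterization of the equality case. Here I expect to use the characterization of decomposable ppav's by the presence of a vanishing thetanull at an even $2$-torsion point (the references \cite{S}, \cite{DGPS}, suitably available in characteristic $\ne 2$): if $A$ is indecomposable then the number of even $2$-torsion points on $\Theta$ is strictly less than its maximum, and one must turn this qualitative statement into the sharp quantitative gap $4^g - 3^g - \Theta(2) \ge 1$ — the most delicate point, since one needs to beat \emph{every} competing configuration, not merely the product one. I would attempt this via the Riemann/addition-theorem quadratic relations among theta functions, writing $\#(A[2]\cap\Theta)$ as $4^g$ minus the number of $x\in A[2]$ with $\theta(x)\ne 0$, and showing that the nonvanishing locus $\{\theta\ne 0\}\cap A[2]$ has at least $3^g$ points, with equality forcing the product structure; the count $3^g$ should emerge from the action of $A[2]$ and a dimension count on the linear system, analogous to how $3^g$ arises as $(2^2-1)^g$ from nonvanishing coordinates of a $2$-torsion point under the product decomposition. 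I would treat the equality analysis last, bootstrapping from a single "extra" vanishing thetanull (forced when $\Theta(2) = 4^g-3^g$ in the indecomposable case would already be a contradiction) to a full splitting of $A$.
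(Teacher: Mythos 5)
Your reduction to the indecomposable case is sound and matches the last step of the paper's argument: the inclusion--exclusion identity for $\Theta(2)$ on a product and the algebraic identity $(4^{g_1}-3^{g_1})4^{g_2}+4^{g_1}(4^{g_2}-3^{g_2})-(4^{g_1}-3^{g_1})(4^{g_2}-3^{g_2})=4^{g}-3^{g}$ are both correct, and together with monotonicity they do reduce everything to showing that an indecomposable principally polarized abelian variety of dimension $g\ge 2$ satisfies the strict inequality. But that is precisely where your proposal stops being a proof: the core claim that the nonvanishing locus $\{\theta\ne 0\}\cap A[2]$ has at least $3^g$ points is asserted with only the heuristic that $3^g=(2^2-1)^g$ ``should emerge'' from the $A[2]$-action and a dimension count. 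That heuristic explains the count only in the product case; it gives no mechanism for a general (let alone indecomposable) $\Theta$. Likewise, the qualitative fact that indecomposability excludes certain configurations of even thetanulls (\cite{S}, \cite{DGPS}) does not convert into the quantitative gap you need, as you yourself note.

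The missing idea is the translation of the counting problem into a rank computation. The paper uses Kempf's theorem (Theorem \ref{rank}): $(\Theta+y)(2)=4^g-\mathrm{rank}\,M(0,y)$, where $M(0,y):H^0(L^2)\otimes H^0(t_y^*L^2)\to H^0(L^2\otimes t_y^*L^2)$ is the multiplication map. The number $3^g$ then appears not as $(2^2-1)^g$ but as $6^g/2^g$: composing with a second multiplication map and invoking Koizumi's surjectivity for $H^0(L^2)\otimes H^0(L^4\hbox{-type})\to H^0(L^6\hbox{-type})$ forces $\dim(\mathrm{Im}\,M(0,y))\cdot 2^g\ge 6^g$, i.e.\ $\mathrm{rank}\,M(0,y)\ge 3^g$, giving the bound for \emph{all} principally polarized abelian varieties at once. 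The strictness for irreducible $\Theta$ (needed for your equality analysis) requires a further global argument --- in the paper, a determinant computation of the degeneracy divisor of a sheaf-level version of this diagram, showing its class $8\cdot 6^{g-1}\Theta$ is incompatible with being a multiple of $4^g\Theta$, plus a separate lemma for $g=2$. Without Kempf's rank formula or some substitute for it, your plan has no route to either the inequality or the strictness, so the proposal as it stands has a genuine gap at its central step.
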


\subsection{Acknowledgement} We are grateful  to Robert Auffarth   and Pietro Pirola for  many discussions, suggestions and for their  continuous  encouragement. The proof of  Kempf's Theorem \ref{rank} outlined in Remark \ref{pareschi-popa} 
originates from work done years ago with Mihnea Popa.

\section{Preliminaries}
Let   $\Theta$  a \emph{symmetric} theta divisor representing the principal polarization.  All divisors representing the principal polarization are translates of $\Theta$. Let us denote $L=\OO_A(\Theta)$. We denote $t_x:A\rightarrow A$ the translation by $x$. 
For $x,y\in A$ we consider the multiplication map of global sections
\begin{equation} M(x,y):H^0(A,t_x^*L^2)\otimes H^0(A,t_y^*L^2)\rightarrow H^0(A, t_x^*L^2\otimes t_y^*L^2)
\end{equation}
\begin{theorem}\label{rank} \emph{(Kempf) } The rank of the map $M(x,y)$ is equal to the number of $\eta\in A[2]$ such that 
\[y-x+\eta\not\in \Theta\]
Hence  $(\Theta+y)(2)$  is equal to the rank of  $M(0,y)$ for all $y\in A$. 
\end{theorem}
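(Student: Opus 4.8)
\emph{Proof strategy.} The idea is to reinterpret the rank geometrically and then compute it by pulling back along the isogeny $(u,v)\mapsto(u+v,u-v)$. Since multiplication of sections commutes with pullback by translations, applying $t_{-x}^*$ to the three spaces identifies $M(x,y)$ with $M(0,z)$ for $z=y-x$, so it suffices to prove $\operatorname{rank}M(0,z)=\#\{\eta\in A[2]:z+\eta\notin\Theta\}$. Identifying $H^0(A,L^2)\otimes H^0(A,t_z^*L^2)$ with $H^0(A\times A,\,L^2\boxtimes t_z^*L^2)$, the map $M(0,z)$ becomes restriction of sections to the diagonal $\Delta\subset A\times A$; hence $\operatorname{rank}M(0,z)$ is the dimension of the image of $H^0(A\times A,\,L^2\boxtimes t_z^*L^2)\to H^0\bigl(\Delta,\,(L^2\boxtimes t_z^*L^2)|_{\Delta}\bigr)$.

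Now pull back along $\mu\colon A\times A\to A\times A$, $\mu(u,v)=(u+v,u-v)$. Since the characteristic is not $2$, $\mu$ is étale of degree $2^{2g}$ with $\ker\mu=\{(\eta,\eta):\eta\in A[2]\}$, and $\mu^*$ is injective on global sections with image the $\ker\mu$-invariants. One has $\mu^{-1}(\Delta)=\bigsqcup_{\eta\in A[2]}A\times\{\eta\}$, with $\mu$ restricting to an isomorphism $A\times\{\eta\}\xrightarrow{\;\sim\;}\Delta$ on each component; and a standard computation (theorem of the cube, valid because $L$ is symmetric) gives $\mu^*(L\boxtimes L)\cong L^2\boxtimes L^2$, whence — choosing $w$ with $2w=z$ and using $u-v+z=(u+w)-(v-w)$ — $\mathcal N:=\mu^*(L^2\boxtimes t_z^*L^2)\cong t_w^*L^4\boxtimes t_{-w}^*L^4$. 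Combining these, $\operatorname{rank}M(0,z)$ equals the dimension of the image of
\[
H^0(A\times A,\mathcal N)^{\ker\mu}\longrightarrow H^0\bigl(A\times\{0\},\,\mathcal N|_{A\times\{0\}}\bigr),
\]
which under $\mathcal N|_{A\times\{0\}}\cong t_w^*L^4$ is the map $\operatorname{id}\otimes\operatorname{ev}_0\colon H^0(t_w^*L^4)\otimes H^0(t_{-w}^*L^4)\to H^0(t_w^*L^4)$, evaluation at $0\in A$ on the second factor.

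To evaluate this I would pass to the theta group of $L^4$. Here $K(L^4)=A[4]$, and — again using that the characteristic is not $2$, which makes $A[2]=2A[4]$ and the commutator pairing $\mu_4$-valued — the subgroup $A[2]$ is maximal isotropic in $A[4]$. Hence each of $H^0(t_{\pm w}^*L^4)$ decomposes, under the action of $A[2]$ through the theta group, into $1$-dimensional weight spaces, one for each character of $A[2]$. Consequently $H^0(A\times A,\mathcal N)$ is a direct sum of lines; its $\ker\mu$-invariant part is the sub-sum of the $4^g$ lines on which $\ker\mu$ acts trivially (one per character of $A[2]$); and $\operatorname{id}\otimes\operatorname{ev}_0$ carries this sub-sum diagonally into the weight decomposition of $H^0(t_w^*L^4)$, the component indexed by $\psi$ surviving exactly when the weight vector of $H^0(t_{-w}^*L^4)$ paired with $\psi$ is nonzero at $0$. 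So $\operatorname{rank}M(0,z)$ is the number of weight vectors of $H^0(t_{-w}^*L^4)$ that do not vanish at $0\in A$.

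Finally one identifies the weight vectors. Fix a generator $\theta$ of $H^0(A,L)$, so $\operatorname{div}(\theta)=\Theta$. Since $[2]\colon A\to A$ is separable with kernel $A[2]$ and $[2]^*L\cong L^4$ ($L$ symmetric), descent along $[2]$ identifies the $2^{2g}$ weight spaces of $H^0(A,L^4)$ with the $2^{2g}$ lines $[2]^*H^0(A,t_\eta^*L)=\langle[2]^*(t_\eta^*\theta)\rangle$, $\eta\in A[2]$, where $[2]^*(t_\eta^*\theta)$ vanishes precisely on $\{x:2x+\eta\in\Theta\}$. Transporting through $t_{-w}^*$ (which carries $\operatorname{ev}_0$ to $\operatorname{ev}_{-w}$ and preserves the weight decomposition), the weight vector paired with $\psi$ is nonzero at $0$ if and only if the matching section $[2]^*(t_\eta^*\theta)$ is nonzero at $-w$, i.e. $2(-w)+\eta=-z+\eta\notin\Theta$, i.e. (symmetry of $\Theta$) $z+\eta\notin\Theta$; and $\eta$ runs over all of $A[2]$. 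This yields $\operatorname{rank}M(0,z)=\#\{\eta\in A[2]:z+\eta\notin\Theta\}$, hence the formula for $\operatorname{rank}M(x,y)$ with $z=y-x$; the last assertion follows since $\#A[2]=4^g$ and $\Theta$ is symmetric, as $\#A[2]-\operatorname{rank}M(0,y)=\#\{\eta\in A[2]:y+\eta\in\Theta\}=\#\bigl(A[2]\cap(\Theta-y)\bigr)=\#\bigl(A[2]\cap(\Theta+y)\bigr)=(\Theta+y)(2)$. The step most in need of care is the bookkeeping of these last two paragraphs — showing $\operatorname{id}\otimes\operatorname{ev}_0$ is diagonal for the two weight decompositions and, above all, pinning down the weight vectors of $H^0(L^4)$ as the $[2]^*$-pullbacks of the $2^{2g}$ two-torsion translates of $\theta$, with the indexing tracked precisely enough that the final count is $\{z+\eta\notin\Theta\}$ rather than some other translate — together with remembering to use the hypothesis $\operatorname{char}\neq 2$ at each of the three places it enters: separability of $\mu$, separability of $[2]$, and maximality of the isotropic subgroup $A[2]\subset K(L^4)$.
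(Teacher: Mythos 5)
Your argument is correct, but it takes a genuinely different route from the one the paper itself records. The paper does not prove Theorem \ref{rank} from scratch: it cites \cite[Thm 3]{Kempf} and, in Remark \ref{pareschi-popa}, sketches a self-contained proof via the Fourier--Mukai package of \cite{PP} --- identifying $M(0,y)$ with the evaluation at $y$ of the global sections of the skew Pontryagin product $\E=L^2\hat{*}L^2$, invoking the WIT criterion to show that $\bigoplus_{\alpha\in \Pic0 A[2]}H^0(\E\otimes L^{-1}\otimes\alpha)\otimes\alpha\rightarrow \E\otimes L^{-1}$ is surjective with all summands of rank one, and reading off the rank of $M(0,y)$ as the number of $\alpha$ for which $H^0(L\otimes\alpha)$ does not vanish at $y$. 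You instead rerun the classical Kempf--Koizumi argument: pull back along $(u,v)\mapsto(u+v,u-v)$, decompose under the theta group of $L^4$ with $A[2]$ as a maximal isotropic subgroup, and identify the weight vectors with the sections $[2]^*(t_\eta^*\theta)$. Both arguments terminate in the same count --- $4^g$ canonical lines, one for each element of $A[2]$ (equivalently of $\Pic0 A[2]$), of which the surviving ones are those whose distinguished section does not vanish at the relevant point --- but yours buys independence from the Fourier--Mukai machinery at the price of the theta-group bookkeeping, which you rightly single out as the delicate step (that the diagonal-invariant lines land in independent weight spaces of $H^0(t_w^*L^4)$, and that the $4^g$ weight lines of $H^0(L^4)$ are exactly the lines $[2]^*H^0(t_\eta^*L)$); the paper's route gets the direct-sum decomposition and the surjectivity of the evaluation maps for free from \cite[Thm 4.1, Thm 5.8]{PP}. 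I see no gap: the reduction to $x=0$, the computation $\mu^*(L^2\boxtimes t_z^*L^2)\cong t_w^*L^4\boxtimes t_{-w}^*L^4$, and the three uses of $\mathrm{char}\neq 2$ are all correctly placed.

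One small point: your closing computation establishes $(\Theta+y)(2)=4^g-\mathrm{rank}\,M(0,y)$, i.e. the corank rather than the rank. This is not an error on your side: it is exactly the relation the paper uses in Section 3 (from $\mathrm{rank}\,M(0,x)\ge 3^g$ it deduces $(\Theta+x)(2)\le 4^g-3^g$), so the literal wording of the last sentence of Theorem \ref{rank} appears to be a slip, and your version is the one to keep.
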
 

This has as a corollary the following 
\begin{theorem}\label{kempf1}
The map $M(x,y)$ is not surjective if and only if
\[y-x\in\bigcup_{\eta\in A[2]}\Theta+\eta\]
In particular, for any fixed $x\in A$,  the multiplication map $M(x,y)$ is surjective for general $y\in A$
\end{theorem}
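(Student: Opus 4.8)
The plan is to derive everything formally from Kempf's Theorem \ref{rank} once we pin down the dimension of the target of $M(x,y)$. First I would observe that, $L$ being a principal polarization, $t_x^*L^2\otimes t_y^*L^2$ is algebraically equivalent to $L^4$, so Riemann--Roch on an abelian variety gives $\chi(t_x^*L^2\otimes t_y^*L^2)=4^g\chi(L)=4^g$; since this bundle is ample it has no higher cohomology, hence $\dim H^0(A,t_x^*L^2\otimes t_y^*L^2)=4^g$. (Incidentally the source $H^0(A,t_x^*L^2)\otimes H^0(A,t_y^*L^2)$ also has dimension $2^g\cdot 2^g=4^g$, so surjectivity of $M(x,y)$ is equivalent to bijectivity, though this will not be needed.) The second input is the fact that in characteristic $\neq 2$ one has $\#A[2]=4^g$.

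Next, Theorem \ref{rank} gives $\mathrm{rank}\,M(x,y)=\#\{\eta\in A[2]:y-x+\eta\notin\Theta\}\le 4^g$, with equality precisely when $y-x+\eta\notin\Theta$ for every $\eta\in A[2]$. Comparing with the dimension count, $M(x,y)$ is surjective if and only if $y-x+\eta\notin\Theta$ for all $\eta\in A[2]$. Now $y-x+\eta\notin\Theta$ is equivalent to $y-x\notin\Theta-\eta$, and since $\eta$ is $2$-torsion, $-\eta=\eta$, so $\Theta-\eta=\Theta+\eta$ (by symmetry of $\Theta$ this translate is also $t_\eta^*\Theta$). Therefore $M(x,y)$ is surjective if and only if $y-x\notin\bigcup_{\eta\in A[2]}(\Theta+\eta)$, and negating both sides yields the first assertion.

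For the last statement, fix $x\in A$. The set $\bigcup_{\eta\in A[2]}(\Theta+\eta)$ is a finite union of translates of the ample (in particular nonzero effective) divisor $\Theta$, hence a proper closed subset of $A$, so its complement is dense and open; its preimage under the automorphism $y\mapsto y-x$ is again dense and open. Thus for $y$ in that set — i.e. for general $y\in A$ — we have $y-x\notin\bigcup_{\eta}(\Theta+\eta)$ and $M(x,y)$ is surjective.

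I do not anticipate a genuine obstacle: granting Theorem \ref{rank}, the argument is a formal comparison of two integers. The only points needing care are the two identifications that make that comparison meaningful — that the dimension of the target is exactly $4^g$ (Riemann--Roch together with ampleness), and that $\#A[2]=4^g$, which is precisely where the hypothesis $\mathrm{char}\neq 2$ enters — together with the harmless bookkeeping $-\eta=\eta$ for $2$-torsion points, which rephrases the condition ``$y-x+\eta\notin\Theta$ for all $\eta$'' as non-membership in the union of translates $\Theta+\eta$.
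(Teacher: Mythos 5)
Your proposal is correct and follows the same route the paper intends: Theorem \ref{kempf1} is stated there as a direct corollary of Kempf's Theorem \ref{rank}, obtained exactly as you do by comparing the rank formula $\#\{\eta\in A[2]: y-x+\eta\notin\Theta\}$ with the target dimension $h^0(t_x^*L^2\otimes t_y^*L^2)=4^g=\#A[2]$ and rewriting the failure condition as membership in $\bigcup_{\eta\in A[2]}(\Theta+\eta)$. Your dimension count via Riemann--Roch and the density argument for the last assertion are exactly the (unwritten) details the paper leaves to the reader.
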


In turn Theorem \ref{kempf1} has the following consequence, originally proved by Koizumi (\cite{koizumi})
\begin{theorem}\label{koizumi}\emph{(Koizumi)} For $h,k\ge 2$ and $h+k\ge 5$ the multiplication map of global sections
\[H^0(A,t_x^*L^h)\otimes H^0(A,t_y^*L^k)\rightarrow H^0(A, t_x^*L^h\otimes t_y^*L^k)\]
is surjective for all $x,y\in A$.
\end{theorem}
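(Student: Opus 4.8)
The plan is to deduce Koizumi's theorem from Theorem~\ref{kempf1} by a bootstrapping argument, reducing the general $(h,k)$ case to the base case where one of the exponents equals $2$. First I would set up the inductive scheme: suppose we want surjectivity of
\[
H^0(A,t_x^*L^h)\otimes H^0(A,t_y^*L^k)\rightarrow H^0(A,t_x^*L^h\otimes t_y^*L^k)
\]
for $h,k\ge 2$ with $h+k\ge 5$. If both $h\ge 3$ and $k\ge 3$ we can peel off a factor: write $H^0(A,t_x^*L^h)$ as the image of $H^0(A,t_x^*L^{h-1})\otimes H^0(A,L)$ (using that $L^{h-1}$ is globally generated for $h-1\ge 2$, or more precisely that the multiplication $H^0(t_x^*L^{h-1})\otimes H^0(t_x^*L)\to H^0(t_x^*L^h)$ is surjective, which is again an instance of the statement we are proving with smaller total degree), and then apply the inductive hypothesis to the pair $(h-1,k+1)$ or a suitable rearrangement. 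This is the standard ``factorization trick'': a composition of two surjective multiplication maps is surjective, so $H^0(t_x^*L^{h_1})\otimes H^0(t_x^*L^{h_2})\otimes H^0(t_y^*L^k)\twoheadrightarrow H^0(t_x^*L^h\otimes t_y^*L^k)$ whenever the intermediate steps are surjective.

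The crux, then, is the base case, where after the reduction we are left with exponents summing to $5$, namely $(h,k)=(2,3)$ (and its symmetric partner $(3,2)$): we must show
\[
H^0(A,t_x^*L^2)\otimes H^0(A,t_y^*L^3)\rightarrow H^0(A,t_x^*L^2\otimes t_y^*L^3)
\]
is surjective for all $x,y$. Here I would factor the degree-$3$ side as $L^3=L^2\otimes L$ up to translation and reduce to showing that $H^0(t_x^*L^2)\otimes H^0(t_z^*L^2)\to H^0(t_x^*L^2\otimes t_z^*L^2)$, i.e.\ the map $M(x,z)$ of Theorem~\ref{rank}, is surjective for a cleverly chosen $z$ depending on $x,y$ — but the issue is that Theorem~\ref{kempf1} tells us $M(x,z)$ can fail to be surjective exactly when $z-x\in\bigcup_{\eta}\Theta+\eta$, and we have no freedom to move $z$ once $x,y$ are fixed. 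The fix is to instead use the freedom in choosing how to split $L^3$: for any decomposition $t_y^*L^3 = t_{y_1}^*L^2 \otimes t_{y_2}^*L$ with $2y_1+y_2 \equiv 3y$ in $A$ (working modulo the fact that $L$ is only determined up to translation, i.e.\ using that $t_b^*L\cong L\otimes P_\beta$ for the appropriate point of $\widehat A$), we get a different map $M(x,y_1)$; we need only that \emph{some} admissible $y_1$ avoids the bad locus $x+\bigcup_\eta(\Theta+\eta)$, and since $y_1$ ranges over a coset of $A$ (an entire translate of $A$ as $y_1$ varies, with $y_2$ adjusting) while the bad locus is a proper closed subset, a general choice works. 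One then composes $M(x,y_1)$ (surjective) with $H^0(t_{y_1}^*L^2\otimes t_x^*L^2)\otimes H^0(t_{y_2}^*L)\to H^0(\cdots)$, where this latter map is surjective because the line bundle $t_{y_1}^*L^2\otimes t_x^*L^2$ on the left is algebraically equivalent to $L^4$ hence $0$-regular, so tensoring by the globally generated $t_{y_2}^*L$ and taking sections is surjective by a standard Castelnuovo--Mumford/Mumford-type vanishing argument for abelian varieties.

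The main obstacle I anticipate is organizing the base case cleanly: one must keep careful track of the translation parameters and of the identification $\mathrm{Pic}(A)\supset L + \widehat A$, so that the condition ``$2y_1 + y_2 \sim 3y$'' is stated correctly and so that letting $y_1$ vary really does sweep out a full translate of $A$ (not something lower-dimensional) — this is what guarantees a generic $y_1$ escapes the bad divisor. A secondary point to nail down is the surjectivity of multiplication against a single copy of a globally generated bundle $M\otimes N$ with $M$ suitably positive (here one wants: if $M$ on an abelian variety satisfies $H^i(M\otimes P)=0$ for all $i>0$ and all $P\in\widehat A$, and $N$ is globally generated, then $H^0(M)\otimes H^0(N)\to H^0(M\otimes N)$ is surjective). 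That statement is classical (it follows from the Mumford regularity machinery or from the theorem that $L^2$ is globally generated and $L^3$ is projectively normal, suitably translated), and I would invoke it; alternatively one can bypass it entirely by iterating $M(x,y)$-type maps, at the cost of a slightly longer induction. Once the base case $(2,3)$ is in hand, the general statement follows by the factorization induction outlined above, completing the proof.
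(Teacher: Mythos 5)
First, note that the paper does not actually prove Theorem \ref{koizumi}: it records it as a known consequence of Theorem \ref{kempf1} and cites \cite{K1}, \cite{pareschi} and \cite{JP} for proofs, so your attempt to reconstruct the deduction is reasonable in spirit. However, your argument has a genuine gap at the crucial point, the base case $(h,k)=(2,3)$. After choosing a decomposition $t_y^*L^3\cong t_{y_1}^*L^2\otimes t_{y_2}^*L$ with $M(x,y_1)$ surjective, you need the second map
\[
H^0(t_x^*L^2\otimes t_{y_1}^*L^2)\otimes H^0(t_{y_2}^*L)\rightarrow H^0(t_x^*L^2\otimes t_{y_1}^*L^2\otimes t_{y_2}^*L)
\]
to be surjective, and you justify this by asserting that $t_{y_2}^*L$ is globally generated. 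It is not: $L$ represents a \emph{principal} polarization, so $h^0(t_{y_2}^*L)=1$ and its unique section vanishes along $\Theta+y_2$. Consequently the displayed map has rank at most $h^0(t_x^*L^2\otimes t_{y_1}^*L^2)=4^g$, while the target has dimension $5^g$; it is never surjective, for any choice of $y_1$. The same defect invalidates the inductive step ``peel off a factor of $H^0(L)$'': the image of $H^0(t_x^*L^{h-1})\otimes H^0(L)$ in $H^0(t_x^*L^{h-1}\otimes L)$ has dimension at most $(h-1)^g<h^g$, and the pair $(h-1,1)$ is not an instance of the theorem being proved. No argument that funnels everything through a single decomposition containing a degree-one piece can work.

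The missing ingredient is that one must sum over \emph{all} admissible decompositions rather than pick one. The classical lemma underlying the proofs in \cite{K1}, \cite{pareschi}, \cite{PP} is that for ample $M,N$ on $A$ the total multiplication map
\[
\bigoplus_{\alpha\in \mathrm{Pic}^0 A} H^0(M\otimes\alpha)\otimes H^0(N\otimes\alpha^{-1})\rightarrow H^0(M\otimes N)
\]
is surjective, together with the fact (``continuous global generation'') that surjectivity persists when $\alpha$ is restricted to any dense subset. With this in hand your scheme does go through: span $H^0(t_y^*L^3)$ by the images of $H^0(t_z^*L^2)\otimes H^0(t_{w}^*L)$ over all $z$ with $2z+w=3y$, discard the non-generic $z$ for which $M(x,z)$ fails to be surjective (Theorem \ref{kempf1}), and apply the lemma once more to the resulting family $H^0(t_x^*L^2\otimes t_z^*L^2)\otimes H^0(t_w^*L)$. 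Without that lemma, Theorem \ref{kempf1} alone does not suffice to close the argument.
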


Theorem \ref{koizumi} is well known and admits different proofs (see e.g. \cite[Thm 6.8(c)]{K1}, \cite[Example 3.7]{pareschi}, \cite[Thm D]{JP}). 

The last assertion of Theorem \ref{kempf1} was proved independently by Kempf (\cite[Thm 1]{Kempf} and Sekiguchi (\cite[Prop. 1.5]{Se}). 
  A  similar statement, in the more general context of any ample line bundle, was proved independently by Ohbuchi (\cite{O}). A proof not using theta-groups, but rather the Fourier-Mukai transform associated to the Poincar\'e line bundle, was subsequently given by Pareschi and Popa in \cite[Thm 5.8]{PP}.
  
  The more precise formula of Theorem \ref{rank} and, consequently, the formula in Theorem \ref{kempf1} are again due to Kempf (\cite[Thm 3]{Kempf}).\footnote{A word of warning about \cite[Thm 3]{Kempf}. 
There is a small mistake in the statement  there: the displayed formula  should read: $2z+\eta\not\in\Theta+2x$ (this is equivalent to the statement of  Theorem \ref{rank} here). 
In fact there is an inaccuracy in the proof: the beginning of the first line of page 773 should be: $2s\not\in\theta+\eta$. }

\begin{remark}\label{pareschi-popa} Although Theorem \ref{rank} is not  mentioned in the paper \cite{PP}, it could have been re-proved there within the same Fourier-Mukai methods, by means of an additional argument.  For the benefit of  readers  more familiar with such methods,  we do it here. To this purpose  in the first place we notice that
 in Theorem \ref{rank} it is sufficient to assume that $x=0$. 
The approach of  \cite{pareschi}, and subsequently \cite[Thm 5.8]{PP} identifies the multiplication map $M(0,y)$ with the evaluation at the point $y\in A$ of the global sections  of the vector bundle (called "skew Pontryagin product", see \cite{PP} Terminology 5.1)) 
$\E:=L^2\hat * L^2$. Therefore we denote such evaluation map in the same way:
\[M(0,y): H^0(\E)\rightarrow \E(y)\]  
We recall that the "WIT criterion" given by \cite[Thm 4.1]{PP} yields that a vector bundle $\F$ is \emph{weakly continuously globally generated} as soon as the dual vector bundle $\F^\vee$ satisfies WIT(g) plus some other technical conditions. In the proof of \cite[Thm 5.8]{PP}  it is shown that the WIT(g) condition (as well as the additional technical conditions) is satisfied by the vector bundle $\bigl(\E\otimes L^{-1}\bigr)^\vee$ (in the more general context where $L$ can be any ample line bundle on $A$). 
Therefore the vector bundle $\E\otimes L^{-1}$ is \emph{weakly continuously globally generated}. Moreover  in the same proof it is also  explicitly computed  the  Fourier-Mukai transform  of the vector bundle $\bigl(\E\otimes L^{-1}\bigr)^\vee$ (see (1) in  the proof of \emph{loc.cit.}), and it is shown that such calculation implies, in the case at hand,  that to be weakly continuously globally generated concretely means that the sum of evaluation maps
\[\bigoplus_{\alpha\in \Pic0 A[2]}H^0(\E\otimes L^{-1} \otimes\alpha)\otimes \alpha\rightarrow \E\otimes L^{-1}. \]
is surjective. Again all this works more generally for any ample line bundle on an abelian variety as well. If $L$ is a \emph{principal} polarization then  $h^0(\E\otimes L^{-1} \otimes\alpha)=1$ for all $\alpha\in \Pic0 A[2]$ (this is again contained in (1) of the proof of \emph{loc.cit.}). Therefore for all $y\in A$ we have the decomposition as direct sum of $1$-dimensional subspaces
\[\bigoplus_{\alpha\in \Pic0 A[2]}H^0(\E\otimes L^{-1} \otimes\alpha)\otimes \alpha (y)\cong  (\E\otimes L^{-1})(y)\]
The commutative diagram
\[\xymatrix{\bigoplus_{\alpha\in \Pic0 A[2]}H^0(\E\otimes L^{-1} \otimes\alpha)\otimes H^0(L\otimes \alpha)\ar[r]\ar[d]& H^0(\E)\ar[d]^{M(0,y)}\\
\bigoplus_{\alpha\in \Pic0 A[2]}H^0(\E\otimes L^{-1} \otimes\alpha)\otimes (L\otimes \alpha)(y)\ar[r] &\E(y)}\]
shows that the rank of the map $M(0,y)$ is precisely the number of line bundles  $\alpha\in\Pic0 A[2]$ such that the evaluation of $H^0(L\otimes \alpha)$ at $y$ is non-zero.  One gets the statement of Theorem \ref{rank} via the isomorphism $\varphi_{L}:A\rightarrow 
\Pic0 A$ sending $x$ to $(t_x^*L)\otimes L^{-1}$.
\end{remark}

\section{Proof of Theorem \ref{main}}

\subsection{ } We keep the notation introduced in the previous section. Specifically, let $L=\OO_A(\Theta)$, where $\Theta$ is a symmetric theta divisor. For a point  $x\in A$ we denote $V_x$ the image of the multiplication map $M(0,x)$. For all $x,y\in A$ let us consider the following commutative diagram of multiplication maps of global sections
 \begin{equation}\label{diagram}\xymatrix{H^0(L^2)\otimes H^0(t_x^*L^2)\otimes H^0(t_y^*L^2)\ar[rr]^{\>\>\>\>\>\>\mathrm{id}\otimes M(x,y) } \ar[dd]^{M(0,x)\otimes \,\mathrm{id}}&&H^0(L^2)\otimes H^0(t_x^*L^2\otimes t_y^*L^2)\ar[dd]\\ \\
  V_x\otimes H^0(t_y^*L^2)\ar[rr]^{ N(x,y)}&&H^0(L^2\otimes t_x^*L^2\otimes t_y^*L^2)}
  \end{equation}
  By Theorem \ref{koizumi} the right vertical arrow is surjective for all $x,y\in A$,   since,   by the theorem of the square, $t_x^*L^2\otimes t_y^*L^2= (t_z^*L)^4$  for a suitable $z$. Let us fix a point $x\in A$. By Theorem \ref{kempf1} the map $\mathrm{id}\otimes M(x,y)$ is surjective for general $y\in A$. Therefore, for such $y$'s, the map $N(x,y)$ is surjective, hence
  $\dim V_x\cdot 2^g\ge 6^g$. In conclusion for all $x\in A$ the rank of the map $M(0,x)$ is $\>\ge 3^g$. Therefore, by Theorem \ref{rank} 
  \[(\Theta+x)(2)\le 4^g-3^g.\]
  for all $x\in A$. Since every divisor  representing the principal polarization is a translate of a symmetric one,  this proves the first assertion of Theorem \ref{main}. 
  
  \subsection{ } To prove the last assertion we need to construct a commutative diagram of locally free sheaves on $A$ inducing diagram (\ref{diagram}) at the fiber level. To this purpose we consider a Poincar\'e line bundle $\cP$ on $A\times \Pic0A$. Given  a coherent sheaf $\F$ such that $h^i(\F\otimes P_\alpha)=0$ for all $i>0$ and for all line bundles $\alpha\in\Pic0 A$ we denote $\Phi(\F)$ the coherent sheaf on $\Pic0 A$ defined as follows:
  \[\Phi(\F)=q_*(p^*(\F)\otimes \cP)\] where $p$ and $q$ are respectively the first and second projection of $A\times\Pic0A$.\footnote{Thanks to the hypothesis on $\F$ this is in fact the Fourier-Mukai transform of $\F$.}
  By base change the hypothesis on $\F$ yields that $\Phi(\F)$ is a locally free sheaf whose fibre at the point $\alpha\in\Pic0 A$ is canonically identified to the vector space $H^0(A,\F\otimes \alpha)$.
  
   For $x\in A$ we consider the commutative diagram of locally free sheaves on $\Pic0 A$
 \begin{equation}\label{diagram1}\xymatrix{H^0(L^2)\otimes H^0(t_x^*L^2)\otimes \Phi(L^2)\ar[rr]^{\>\>\>\>\>\>\mathrm{id}\otimes \widetilde{\mathcal M_x} } \ar[dd]^{M(0,x)\otimes \,\mathrm{id}}&&H^0(L^2)\otimes \Phi(t_x^*L^2\otimes L^2)\ar[dd]\\ \\
  V_x\otimes \Phi(L^2)\ar[rr]^{ \widetilde{\mathcal N_x}}&&\Phi(L^2\otimes t_x^*L^2\otimes L^2)}
  \end{equation}
  where the maps $\widetilde{M_x}$, $\widetilde{N_x}$ and the left vertical arrow are fiberwise multiplication maps of global sections. The definition of the maps appearing in diagram (\ref{diagram1}) is left to the reader.
  
  For an ample line bundle $M$ on $A$ let us denote $\varphi_{M}:A\rightarrow \Pic0 A$ the isogeny associated to the polarization represented by $M$: $\varphi_{M}(y)=(t_y^*M)\otimes M^{-1}$. As $L$ represents a principal polarization,   $\varphi_L:A\rightarrow \Pic0 A$ is an isomorphism. Via such isomorphism, for all positive integers $k$ the isogeny $\varphi_{L^k}$ is identified to the multiplication by $k$ homomorphism $k_A:A\rightarrow A$, defined by $y\mapsto ky$. Applying $\varphi_{L^2}^*$ to diagram (\ref{diagram1}) we get
  \begin{equation}\label{diagram2}\xymatrix{H^0(L^2)\otimes H^0(t_x^*L^2)\otimes \varphi_{L^2}^*\Phi(L^2)\ar[rr]^{\>\>\>\>\>\>\mathrm{id}\otimes \mathcal M_x } \ar[dd]^{M(0,x)\otimes \,\mathrm{id}}&&H^0(L^2)\otimes \varphi_{L^2}^*\Phi(t_x^*L^2\otimes L^2)\ar[dd]\\ \\
  V_x\otimes \varphi_{L^2}^*\Phi(L^2)\ar[rr]^{ \mathcal N_x}&&\varphi_{L^2}^*\Phi(L^2\otimes t_x^*L^2\otimes L^2)}
  \end{equation} 
  By construction diagram (\ref{diagram2}) induces diagram (\ref{diagram})  at the fibre level.

 \subsection{ } Next, we make some calculations to compute the (determinant of) the locally free sheaves appearing in  diagram (\ref{diagram2}).  It is well known (\cite[Prop. 3.11]{mukai}) that, for an ample line bundle $M$ on $A$, 
  \[\varphi_{M}^*\Phi(M)\cong H^0(M)\otimes M^{-1}.\]
  Therefore
\[\varphi_{L^2}^*\Phi(L^2)\cong H^0(L^2)\otimes L^{-2}\]
Similarly, since the isogeny $\varphi_{L^k}$ is identified to the isogeny $k_A$, we have that
\[{2_A}^*\Bigl( \varphi_{L^2}^*\Phi( t_x^*L^2\otimes L^2)\Bigr)\cong H^0( t_x^*L^2\otimes L^2))\otimes  t_{-x}^*L^{-2}\otimes L^{-2}\]
 \[{3_A}^*\Bigl( \varphi_{L^2}^*\Phi(L^2\otimes t_x^*L^2\otimes L^2)\Bigr)\cong H^0(L^4\otimes t_x^*L^2))\otimes L^{-4}\otimes t_{-x}^*L^{-2}\]
Since, given a line bundle $M$ on $A$, the line bundle $k_A^*M$ is algebraically equivalent to $M^{k^2}$, after short calculations we get
\begin{equation}\label{det1}\det ( \varphi_{L^2}^*\Phi(  L^2)\sim L^{-2^{g+1}}\end{equation}
\begin{equation}\label{det2}\det ( \varphi_{L^2}^*\Phi( t_x^*L^2\otimes L^2)\sim L^{-4^g}\end{equation}
\begin{equation}\label{det3}\det(\varphi_{L^2}^*\Phi(L^2\otimes t_x^*L^2\otimes L^2))\sim L^{-4\cdot 6^{g-1}}\end{equation}
where $\sim$ means algebraic equivalence.

 \subsection { } By Theorem \ref{kempf1} the map $\mathrm{id}\otimes\mathcal M_x$  drops rank at a divisor $2^gE_x$ where $E_x$ is a divisor such that
  \[supp(E_x)=supp\Bigl(\sum_{\eta\in A[2]}\Theta+x+\eta\Bigr)\]
  By (\ref{det1}) and (\ref{det2})
  $E_x\sim 4^g\Theta$ hence 
  \[E_x=\sum_{\eta\in A[2]}\Theta+\eta+x\]

  \subsection{ } After this preparation now we are ready for the proof of the second part of Theorem \ref{main}. To begin with we claim that if$g\ge 2$ and  $\Theta$ is irreducible then the bound of Theorem \ref{main} cannot be achieved, i.e. $\dim V_x>3^g$ for all $x\in A$.  Indeed, if $\dim V_x=3^g$ for a point $x\in A$ then
   the two locally free sheaves of the bottom arrow of diagram (\ref{diagram2}) have the same rank, so that the map drops rank on a divisor $D_x$. From (\ref{det1}) and (\ref{det3}) it follows that
  \begin{equation}\label{coefficient}D_x\sim 8(6^{g-1}\Theta)\end{equation}
  By Theorem \ref{koizumi} the support of $D_x$ is contained in the support of $E_x$. However, nothing changes if we replace
   $L$ with $t_\eta^*L$, with
  $\eta\in A[2]$. Therefore both the divisors $D_x$ and $E_x$ are invariant with respect to the action of $A[2]$. Hence
  \[supp(E_x)=supp(D_x)=\sum_{\eta\in A[2]}\Theta+\eta+x\]
  In conclusion, if $\Theta$ is irreducible
  \[supp(D_x)=\sum_{\eta\in A[2]}\Theta+\eta+x\]
  Hence $D_x$ must be, up to algebraic equivalence, an integral multiple of $4^g\Theta$. But this, for $g\ge 3$, is clearly in contrast with (\ref{coefficient}). 
  Concerning the case $g=2$,  one can prove directly that  $\Theta$ is irreducible then for all $x\in A$  the rank of the map $ M(0,x)$ is $>9$. If $x\in A[2]$ this is well known, see \cite{Fr} (recall that
   $M(0,0)=M(0,x)$ for all $x\in A[2]$) .  The same thing should be known also in the case $x\not\in A[2]$. However, for lack of reference, we include a proof (of a more precise statement) in Lemma \ref{g=2} below. 
    Therefore, as claimed, assuming $g\ge 2$, if for some $x\in A$  the rank of the map $M(0,x)$ is equal to $3^g$ then $\Theta$ must be reducible.  Finally, for sake of completeness, we recall that for an elliptic curve $A$  the rank of the map $M(0,x)$ is equal to
    $3$ if and only if $x\in A[2]$.

  \subsection{ }  By the previous step and the the decomposition theorem it follows that if, for some $x\in A$, $(\Theta+x)(2)=4^g-3^g$ then
  the principally polarized abelian variety $A$ is the product of two lower dimensional principally polarized abelian varieties. At this point,   arguing by induction, we have that  $A$ is a product of elliptic curves  and   $$E_x=4^{g-1}H,\quad   D_x=6^{g-1}2H $$
 where 
  \[H=\underset{ i=1,\dots , g}{\sum_{x_i\in E_i[2]}}E_1\times\cdots\times\{x_i\}\times\cdots\times E_g.\] 
  Theorem \ref{main} follows.\endproof
  
  \ \\
  
  \begin{lemma}\label{g=2} Let $(A,\Theta)$ be a principally polarized abelian variety of dimension $2$ with $\Theta$ irreducible. Then, keeping the above notation, $rk M(0,x)\ge 11$ if $x\not\in A[2]$. 
  \end{lemma}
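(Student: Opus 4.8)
The plan is to apply Kempf's Theorem~\ref{rank} to reduce the statement to a bound on the number of $2$-torsion points lying on the translated divisor $\Theta+x$, and then to use that an irreducible principally polarized abelian surface is the Jacobian of a smooth genus~$2$ curve, namely of its own theta divisor. Taking $0$ in the first argument and $x$ in the second, Theorem~\ref{rank} gives that the rank of $M(0,x)$ equals $\#\{\eta\in A[2]:x+\eta\notin\Theta\}$. Since $\Theta$ is symmetric and every element of $A[2]$ is its own inverse, one has $A[2]\cap(\Theta-x)=A[2]\cap(\Theta+x)$, so this rank equals $16-(\Theta+x)(2)$. Hence the lemma will follow --- in fact with $15$ in place of $11$ --- once we show that $(\Theta+x)(2)\le 1$ whenever $x\notin A[2]$.

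To do this I would realize $A$ as $\mathrm{Jac}(C)\cong\mathrm{Pic}^0(C)$, where $C$ is the smooth genus~$2$ curve $\Theta$, and fix a degree-$1$ theta characteristic $\kappa_0$ (say $\OO_C(w)$ with $w$ a Weierstrass point, so $\kappa_0^{\otimes 2}\cong K_C$); then $\Theta$ is $\{N\in\mathrm{Pic}^0(C):h^0(N\otimes\kappa_0)>0\}$, which is automatically symmetric. A direct computation then pins down the $2$-torsion points on $\Theta+x$: an $\eta\in A[2]$ lies on $\Theta+x$ exactly when $\kappa_0\otimes\eta\otimes x^{-1}\cong\OO_C(p)$ for some $p\in C$, and then the condition $\eta^{\otimes 2}\cong\OO_C$ becomes, using $\kappa_0^{\otimes 2}\cong K_C$, the equality $\OO_C(2p)\cong K_C\otimes x^{-2}$. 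Therefore
\[(\Theta+x)(2)=\#\{p\in C:\OO_C(2p)\cong K_C\otimes x^{-2}\}.\]

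Now suppose $x\notin A[2]$, so that $x^{\otimes 2}$ is a nontrivial degree-$0$ line bundle and $h^0(C,x^{\otimes 2})=0$. By Riemann--Roch the degree-$2$ line bundle $M:=K_C\otimes x^{-2}$ then has $h^0(C,M)=1$, so $|M|$ is a single effective divisor $D$; an isomorphism $\OO_C(2p)\cong M$ forces $2p=D$ and hence determines $p$ uniquely. This gives $(\Theta+x)(2)\le 1$, as wanted. The argument is largely bookkeeping once the Jacobian picture is set up; the steps requiring some care are the dictionary between the sixteen symmetric theta divisors $\Theta+\eta$ and the sixteen theta characteristics of $C$, and the symmetry identity $(\Theta-x)(2)=(\Theta+x)(2)$ used at the outset. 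A slicker alternative is to argue on the Kummer quartic surface $K\subset\mathbb{P}^3$ associated with $(A,\Theta)$: the sixteen tropes cut $K$ in sixteen conics which pairwise meet only at the sixteen nodes, i.e. at the images of $A[2]$, so a point of $K$ that is not a node --- such as the image of $x\notin A[2]$ --- lies on at most one of these conics, whence $x$ lies on at most one translate $\Theta+\eta$. For the main argument only the weaker bound $(\Theta+x)(2)\le 5$, i.e. rank $\ge 11$, is actually needed, and that is what the lemma records.
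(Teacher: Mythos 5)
Your argument is correct, but it takes a genuinely different route from the paper's. The paper proves the lemma by working directly on the multiplication map: writing $C=\Theta$ for the smooth genus-$2$ curve, it filters $H^0(L^2)\otimes H^0(t_x^*L^2)$ and $H^0(L^2\otimes t_x^*L^2)$ by restriction to $C$, shows the induced map on the top graded piece $H^0(L^2_{|C})\otimes H^0((t_x^*L^2)_{|C})\to H^0((L^2\otimes t_x^*L^2)_{|C})$ is surjective of rank $7$ (Eisenbud--Koh--Stillman), and that the intermediate piece contributes rank at least $4$, giving $11$. You instead run Kempf's Theorem \ref{rank} in reverse: the rank of $M(0,x)$ equals $16-(\Theta+x)(2)$ (you correctly use the first sentence of that theorem, which is also how it is applied in the proof of the main result), and you then bound $(\Theta+x)(2)$ via the theta-characteristic dictionary on the genus-$2$ Jacobian. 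Your count checks out: for $x\notin A[2]$ the degree-$2$ line bundle $K_C\otimes x^{-2}$ has $h^0=1$, so at most one $\eta\in A[2]$ lies on $\Theta+x$; equivalently, the sixteen symmetric theta divisors pairwise meet only in $2$-torsion points (the classical $(16_6)$ Kummer configuration you mention). This yields the sharper bound $\mathrm{rk}\,M(0,x)\ge 15$, and together with the value $6$ at $2$-torsion translates it settles the $g=2$ irreducible case of the main theorem with no multiplication-map analysis at all. There is no circularity, since Theorem \ref{rank} is an input to the paper, not a consequence of the lemma. What the paper's proof buys is independence from Kempf's rank formula for this step and a filtration technique in the same spirit as the rest of the argument; what yours buys is a stronger, more geometric statement from classical facts. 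One minor point: the theta characteristic $\kappa_0$ should be the one for which the given symmetric $\Theta$ equals $\{N:h^0(N\otimes\kappa_0)>0\}$, which need not be of the form $\OO_C(w)$; since $\kappa_0^{\otimes 2}\cong K_C$ in every case, this does not affect your computation.
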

  \begin{proof} Since $\Theta$ is irreducible, it is a smooth curve of genus $2$, say $C$.  Keeping the above notation we have the following exact sequences
\[ 0\rightarrow H^0(L)\rightarrow H^0(L^2)\rightarrow H^0(L^2_{|C})\rightarrow 0\]
  \[  0\rightarrow H^0(L^{-1}\otimes t_x^*L^2)\rightarrow H^0(t_x^*L^2)\rightarrow H^0((t_x^*L^2)_{|C})\rightarrow 0\] 
  inducing the filtration on the tensor product of the middle vector spaces
  \begin{equation}\label{filt1}H^0(L)\otimes H^0(L^{-1}\otimes t_x^*L^2)\subset \>U\subset \> H^0(L^2)\otimes H^0(t_x^*L^2)
  \end{equation}
  where
  \[\frac{U}{H^0(L)\otimes H^0(L^{-1}\otimes t_x^*L^2)}\cong \bigl(H^0(L)\otimes H^0((t_x^*L^2)_{|C})\bigr)\oplus \bigl(H^0(L^{-1}\otimes t_x^*L^2)\otimes H^0(L^2_{|C})\bigr)\]
  and
  \[ \frac{H^0(L^2)\otimes H^0(t_x^*L^2)}{U}\cong H^0(L^2_{|C})\otimes H^0((t_x^*L^2)_{|C})\]
  We have also the filtration
   \begin{equation}\label{filt2} H^0( t_x^*L^2)\subset H^0(L\otimes t_x^*L^2)\subset H^0(L^2\otimes t_x^*L^2)
   \end{equation}
   where obviously
   \[\frac{H^0(L\otimes t_x^*L^2)}{H^0( t_x^*L^2)}\cong H^0(L\otimes t_x^*L^2_{|C})\quad\hbox{and}\quad\frac{H^0(L^2\otimes t_x^*L^2)}{H^0(L\otimes t_x^*L^2)}\cong H^0(L^2\otimes t_x^*L^2_{|C})\]
   The multiplication map of global sections $M(0,x)$ is in fact a map of filtered vector spaces from (\ref{filt1}) to (\ref{filt2}). Since $x\not\in A[2]$, the line bundles $L^2_{|C}$ and $(t_x^*L^2)_{|C}$ do not coincide. This being  the case the multiplication map of global sections $H^0(L^2_{|C})\otimes H^0((t_x^*L^2)_{|C})\rightarrow H^0((L^2\otimes t_x^*L^2)_{|C})$ is surjective, hence of rank 7. This could be proved directly, however it is a (very) particular case of a general result  of Eisenbud, Koh and Stillman about multiplication maps of global sections on curves, and even syzygies, see \cite[Thm 2]{eks}. 
   On the other hand, we claim that the induced map $U\rightarrow H^0(L\otimes t_x^*L^2)$ has rank $\ge 4$.  Therefore the rank of the map $M(0,x)$ is $\ge 11$, proving the Lemma.  
   To prove the claim, note that the map $H^0(L)\otimes H^0(L^{-1}\otimes t_x^*L^2)\rightarrow
   H^0( t_x^*L^2)$ has rank $1$ and the map 
   \[\bigl(H^0(L)\otimes H^0((t_x^*L^2)_{|C})\bigr)\oplus \bigl(H^0(L^{-1}\otimes t_x^*L^2)\otimes H^0(L^2_{|C})\bigr)\rightarrow H^0((L\otimes t_x^*L^2)_{|C})\]
   has rank $3$. Indeed, while  the restriction to the first  summand is zero, the restriction to the second summand is injective. Indeed, since $x\not\in A[2]$, $L^{-1}\otimes t_x^*L^2$ is not isomorphic to $L$, the restriction to $C$ of  the $1$-dimensional space of global sections $H^0(L^{-1}\otimes t_x^*L^2)$ is non-zero. 
 \end{proof}
  
  \section{Variants, questions and remarks}
\subsection{}  We start with a remark concerning Theorem \ref{main}. We  observe that only  when $A$ 
is a product of elliptic curves and $\Theta$ is a symmetric theta
divisor of $A$ the map induced  by $|2\Theta|$ is of degree $2^g$ and the image is smooth. In fact it is, up to a projectivity, the  image of the   Veronese  map of  $g$ copies of $\PP^1$ into $\PP^{ 2^g-1}$.  In this case the image is defined  by the intersection of $2^{g-1}(2^g+1)- 3^g$ quadrics  that  are a basis of   the  kernel of  the  map 
\begin{equation}  \mathrm{Sym}M(0,0):\mathrm{Sym}^2 H^0(A,L^2)\rightarrow H^0(A,  L^4)
\end{equation}

\subsection{} 
 Theorem \ref{main} yields a (non-optimal) bound  for the numbers $\Theta(n)$ for $n=2m$ even,  when the characteristic of the  field does not divide $n$. In fact, since  $A[2m]/A[2]\equiv (\mathbb Z/m\mathbb Z)^{2g}$, Theorem \ref{main} implies
  \begin{corollary}\label{ntor}
$$\Theta(2m)\leq m^{2g}(4^g-3^g).$$
\end{corollary}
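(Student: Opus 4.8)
The plan is to deduce Corollary \ref{ntor} directly from Theorem \ref{main} by a counting argument on the quotient group $A[2m]/A[2]$. First I would fix a symmetric theta divisor $\Theta$ (if the given divisor is not symmetric, replace it by a translate; translation by a point of $A$ is a bijection on torsion points only up to a shift, so one must be slightly careful — but the bound $\Theta(2m)$ we are after is over \emph{all} divisors representing the principal polarization, and every such divisor is a translate $\Theta+x$ of a fixed symmetric one, so it suffices to bound $(\Theta+x)(2m)$ uniformly in $x$). So I would aim to show $(\Theta+x)(2m)\le m^{2g}(4^g-3^g)$ for every $x\in A$.

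Next, the key step: consider the quotient homomorphism $\pi\colon A[2m]\to A[2m]/A[2]$. Since the characteristic does not divide $2m$, we have $A[2m]\cong(\mathbb Z/2m\mathbb Z)^{2g}$ and the subgroup $A[2]\cong(\mathbb Z/2\mathbb Z)^{2g}$ is exactly the $2$-torsion of this group, so indeed $A[2m]/A[2]\cong(\mathbb Z/m\mathbb Z)^{2g}$, which has $m^{2g}$ elements. Now decompose $A[2m]\cap(\Theta+x)$ according to the fibers of $\pi$: a point $p\in A[2m]$ lies in $\Theta+x$ iff $p\in\Theta+x$, and the fiber $\pi^{-1}(\pi(p))$ is a coset $p+A[2]$. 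The crucial observation is that for a fixed coset $c=p+A[2]$, the number of elements of $c$ lying in $\Theta+x$ equals $\#\{\eta\in A[2]:p+\eta\in\Theta+x\}=\#\bigl(A[2]\cap(\Theta+x-p)\bigr)=(\Theta+x-p)(2)$, because $p+\eta\in\Theta+x$ is equivalent to $\eta\in(\Theta+x-p)$ and $\eta$ runs over $A[2]$. By the first assertion of Theorem \ref{main}, applied to the divisor $\Theta+x-p$ (again a translate of a symmetric theta divisor, so the bound applies), this count is $\le 4^g-3^g$.

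Summing over the $m^{2g}$ cosets then gives
\[
(\Theta+x)(2m)=\sum_{c\in A[2m]/A[2]}\#\bigl(c\cap(\Theta+x)\bigr)\le m^{2g}\,(4^g-3^g),
\]
which is the desired inequality; taking the supremum over all $x$ (equivalently over all divisors representing the principal polarization) yields Corollary \ref{ntor}. The only genuinely delicate point — and the one I would be most careful about — is the bookkeeping with symmetric versus arbitrary translates: Theorem \ref{main}'s inequality is stated for \emph{all} divisors representing the principal polarization, so it applies verbatim to each $\Theta+x-p$, and no symmetry hypothesis is actually needed in the fiberwise step. There is no real obstacle here; the statement is a formal consequence once one sees that intersecting a fixed $A[2]$-coset with a theta translate is again a $2$-torsion-on-theta count. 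One could also remark that the bound is generally far from sharp for $m>1$, since equality in every fiber would force $\Theta+x-p$ to be a symmetric theta divisor of a product of elliptic curves for $m^{2g}$ distinct translates simultaneously, which is impossible; but proving the optimal bound for $\Theta(n)$ in general is exactly the open problem left by \cite{APS}, so I would not pursue it here.
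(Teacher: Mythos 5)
Your argument is correct and is exactly the one the paper intends: decompose $A[2m]$ into the $m^{2g}$ cosets of $A[2]$, bound the intersection of each coset with the theta translate by $4^g-3^g$ via the first assertion of Theorem \ref{main} applied to the shifted divisor, and sum. The paper only gives this as a one-line remark, so your write-up simply fills in the same details.
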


For $n$-torsion
points the bound should be $n^{2g} -(n^{2}-1)^g$, with equality if and only if $(A,L)$ is the polarized product of elliptic curves, cf.\cite{APS}.  A certain evidence  for this can be deduced  
from Theorem \ref{kempf1}.  In fact we  know that
  $M(0,y)$ is not surjective if and only if
\[y \in\bigcup_{\eta\in A[2]}\Theta+\eta\]
In particular  the multiplication map $M(0,y)$ is surjective for general $y\in A$.

 Now assume that $y $ is a $n=2m$ torsion point,   
 thus     we  have  

$$\Theta(n)=\bigoplus_{y\in A[n]/A[2]} \dim\ker ( M(0,y)).$$

 When $n>>0$,  because of the density of torsion points,   in mostly of cases we have that the  dimension is 0.  Hence we  can improve  the result of  Corollary \ref{ntor} with the following estimate 
 $$\Theta(n)\leq  ( 4^g- 3^g)K_n  m^{2g-2}   $$
 for a suitable constant $K_n$.

\subsection{} In the proof of Theorem \ref{main} an important role is played by the multiplication maps of global sections  
$M(0,y): H^0(A, L^2)\otimes H^0(A, t_y^*L^2)\rightarrow H^0(A, L^2\otimes t_y^*L^2)$, where $L$ represents the principal polarization. 
A crucial aspect of such maps is that dimensions of the source and of the target are equal. This doesn't happen for $n$ higher than $2$. However  in the paper \cite{JP}  natural analogues of the above maps   were introduced. 
These are the  "fractional"  multiplication maps of global sections 
\[H^0(A, L^n)\otimes H^0(A, t_y^*L^{n(n-1)})\rightarrow H^0(A, (n\!-\!1)_A^*(L^n)\otimes t_y^*L^{n(n-1)})\]
obtained by factoring with the natural inclusion of the first factor into $H^0(A, (n\!-\!1)_A^*L^n)$, 
where $(n\!-\!1)_A:A\rightarrow A$ is the isogeny $x\mapsto (n-1)x$. 
 We refer to the discussion after the proof of Corollary 8.2 of \cite{JP}  for some explanation of the attribute "fractional" as well as why such maps play the same role of the maps $M(0,y)$ for higher $n$. At present we are not able to treat them as 
  we do for "integral" multiplication maps of global sections, but we hope to come back to this in the future.
 
\subsection{ } Next, we notice that, in the complex case,   in the paper of Marcucci and Pirola, Theorem \ref{main} yields a lower bound on the number of non-effective square roots of line bundles of degree $\le g-1$ on curves of genus $g$, as well as related
bounds 
(see \cite[Prop. 2.1]{PM}).

\subsection{ }  It  would be  nice  to  get an estimate of 
$\Theta(2)$  when   $(A,\Theta)$ is an irreducible  principally polarized abelian variety.
At the best of our knowledge, in the complex case,   we have that if $(A, \Theta)$ is  the  jacobian of a hyperelliptic  curve and $\Theta$ is symmetric thus
 $$\Theta(2)=  4^g- {{2g+1}\choose g}.$$
 
 This is  a well known   result, cf.\cite{Mu}.  It seems natural to guess that  this is the maximal number (see also \cite{Po}).  In  other known  examples, it  appears   fundamental the existence of an involution, 
  cf. \cite{bea} and \cite{vg}.  
 Nevertheless in genus 4 there is an irreducible principally polarized abelian variety  that is  not a jacobian with 
 $\Theta(2)= 130$, the same as the  hyperelliptic  case (see \cite{var} or \cite{deb}). In this case the  period matrix is  related to the  Gaussian lattice $E_8$.   In  higher dimension   this class of examples have  a number of vanishing thetanulls which is smaller
 compared to the hyperelliptic Jacobians.
  However, as pointed out in  \cite{bea2}, in  a particular  subcase,  these vanishing thetanulls  have the particular property of being syzygetic (in the classical terminology).  
  It would also be interesting to get a  bound for this special case.

\end{document}